\theoremstyle{plain} 
\newtheorem{theorem}{Theorem}[section] 
\newtheorem{lemma}[theorem]{Lemma} 
\newtheorem{proposition}[theorem]{Proposition} 
\newtheorem*{question}{Question} 
\theoremstyle{definition}
\theoremstyle{remark} 
\newtheorem{claim}[theorem]{Claim}
\numberwithin{equation}{section}
\numberwithin{figure}{section}
\newcommand{\bd}{\begin{description}}   
\newcommand{\ed}{\end{description}} 
\newcommand{\ba}{\begin{array}}      \newcommand{\ea}{\end{array}} 
\newcommand{\bc}{\begin{center}}     \newcommand{\ec}{\end{center}} 
\newcommand{\be}{\begin{enumerate}}  \newcommand{\ee}{\end{enumerate}} 
\newcommand{\beq}{\begin{eqnarray}}  \newcommand{\eeq}{\end{eqnarray}} 
\newcommand{\beQ}{\begin{eqnarray*}} \newcommand{\eeQ}{\end{eqnarray*}} 
\newcommand{\bi}{\begin{itemize}}    \newcommand{\ei}{\end{itemize}}
\begin{document} 
 \title[Local moves for links with common sublinks %%and distinct $C_k$-move
]{Local moves for links with common sublinks} 

\author[J.B. Meilhan]{Jean-Baptiste Meilhan} 
\address{Institut Fourier, Universit\'e Grenoble 1 \\
         100 rue des Maths - BP 74\\
         38402 St Martin d'H\`eres , France}
	 \email{jean-baptiste.meilhan@ujf-grenoble.fr}
\author[E. Seida]{Eri Seida} 
\address{Tokyo Gakugei University\\
         Department of Mathematics\\
         Koganeishi \\
         Tokyo 184-8501, Japan}
	% \email{m101706p@st.u-gakugei.ac.jp}
\author[A. Yasuhara]{Akira Yasuhara} 
\address{Tokyo Gakugei University\\
         Department of Mathematics\\
         Koganeishi \\
         Tokyo 184-8501, Japan}
	 \email{yasuhara@u-gakugei.ac.jp}

\thanks{
The first author is supported by the French ANR research project ANR-11-JS01-00201.
The third author is partially supported by a Grant-in-Aid for Scientific Research (C) 
($\#$23540074) of the Japan Society for the Promotion of Science.}
\subjclass[2000]{57M25, 57M27}
%
%\keywords{$C_k$-moves, Brunnian link, claspers}
% 
\maketitle

\begin{abstract} 
A $C_k$-move is a local move that involves $(k+1)$ strands of a link.
A $C_k$-move is called a $C_k^d$-move if these $(k+1)$ strands belong to 
mutually distinct components of a link. 
Since a $C_k^d$-move preserves all $k$-component sublinks of a link, 
we consider the converse implication: are two links with common $k$-component sublinks related by 
a sequence of $C_k^d$-moves? 
We show that the answer is yes under certain assumptions, 
and provide explicit counter-examples for more general situations.  
In particular, we consider $(n,k)$-Brunnian links, i.e. $n$-component links whose $k$-component sublinks are all trivial. 
We show that such links can be deformed into a trivial link by $C_k^d$-moves, thus generalizing a result of Habiro and Miyazawa-Yasuhara, 
and deduce some results on finite type invariants of $(n,k)$-Brunnian links.  
\end{abstract} 
\section{Introduction}
Habiro \cite{H} and Goussarov \cite{G} introduced independently the notion of $C_k$-move, 
which is a local move that involves $k+1$ strands of a link as illustrated in Figure~\ref{cnm}.  
A $C_1$-move is just a crossing change.
Alternatively, a $C_k$-move can be defined in terms of ``insertion'' of elements of the 
$k$th term of the lower central series of the pure braid group \cite{stanford}.

\begin{figure}[!h]
\includegraphics[trim=0mm 0mm 0mm 0mm, width=.8\linewidth]
{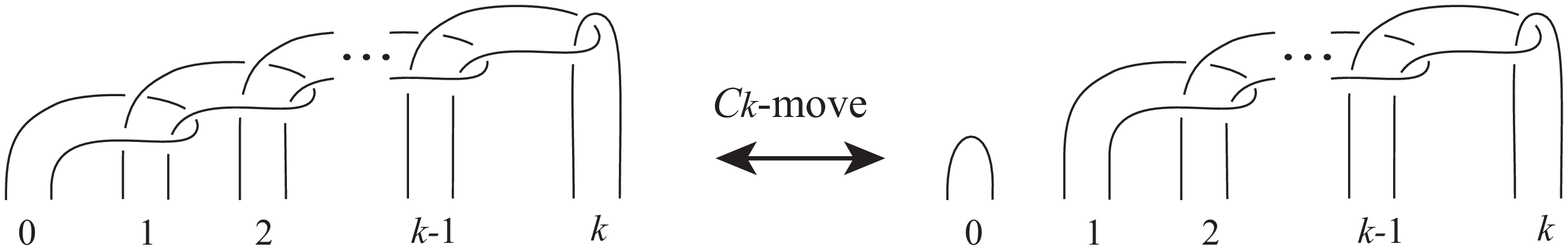}
\caption{A $C_k$-move involves $k+1$ strands of a link. 
%, labelled here by integers between $0$ and $k$.   
} \label{cnm}
\end{figure}

In particular, 
if all $k+1$ strands involved in a $C_k$-move belong to pairwise distinct components, 
we call it a \emph{$C_k^d$-move}.  
The $C_k$-move (resp. $C_k^d$-move) generates an equivalence relation on links, 
called \emph{$C_k$-equivalence} (resp. \emph{$C_k^d$-equivalence}), which becomes finer as $k$ increases.  
It is easy to see that if two links are $C_k^d$-equivalent, then 
they have \emph{common $k$-component sublinks}. 
More precisely, if two ordered links $L=K_1\cup\cdots\cup K_n$ and $L'=K'_1\cup\cdots\cup K'_n$ are 
$C_k^d$-equivalent, then for any subset $S\subset \{1,...,n\}$ with $k$ elements, $\bigcup_{i\in S}K_i$ and $\bigcup_{i\in S}K'_i$ are 
ambient isotopic. 
It seems natural to ask whether the converse implication holds as well.

\begin{question} 
If two links have common $k$-component sublinks, then are they $C_k^d$-equivalent?
\end{question}

Since any link is $C_1^d$-equivalent to a completely split link, the answer is obviously yes 
for $k=1$. Hence we may assume that $k\geq 2$. 

The question can also be given a positive answer for a special class of links. 
An {\em $(n,k)$-Brunnian link} is an $n$-component link whose $k$-component sublinks are trivial \cite{P}. In particular, if $n=k+1$, then 
it is a Brunnian link in the usual sense. 
%We have the following result. 
\begin{theorem}\label{brunnian}
A link is $(n,k)$-Brunnian if and only if it is $C_k^d$-equivalent to the $n$-component trivial link.
\end{theorem}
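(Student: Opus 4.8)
The plan is to prove the two implications separately, the forward one being immediate and the converse carrying all the content. The implication ``$C_k^d$-equivalent to the trivial link $\Rightarrow$ $(n,k)$-Brunnian'' follows at once from the observation already recorded above: a $C_k^d$-move preserves every $k$-component sublink, so a link $C_k^d$-equivalent to the trivial link $U$ has the same $k$-component sublinks as $U$, namely trivial ones. The substance is the converse, which I would attack with clasper theory (in the sense of Habiro and Goussarov), translating everything into a statement about tree claspers whose leaves lie on pairwise distinct components, which I will call $d$-trees: surgery along a $d$-tree of degree $m$ realises a $C_m^d$-move, and since $C_m^d$-equivalence is finer than $C_k^d$-equivalence for $m\geq k$, surgery along a $d$-tree of degree $\geq k$ changes a link only up to $C_k^d$-equivalence.

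I would argue the converse by induction on $k$. For $k=1$ the claim follows from the fact noted in the introduction that every link is $C_1^d$-equivalent to the completely split union of its components, which for an $(n,1)$-Brunnian link (unknotted components) is exactly the trivial link. For the inductive step, observe that every $(k-1)$-component sublink sits inside a $k$-component sublink and a sublink of a trivial link is trivial, so an $(n,k)$-Brunnian link $L$ is also $(n,k-1)$-Brunnian. By the inductive hypothesis $L$ is $C_{k-1}^d$-equivalent to $U$, and via the clasper calculus this means that $L$ is obtained from $U$ by surgery along a disjoint union of $d$-trees of degree exactly $k-1$, say $T_1,\dots,T_r$ (each $C_{k-1}^d$-move is surgery along one such tree, and consecutive moves can be made disjoint in a collar). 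It then remains to remove these degree-$(k-1)$ trees modulo $C_k^d$-equivalence.

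The key point is that the degree-$(k-1)$ $d$-trees are detected precisely by the $k$-component sublinks. Indeed, a $d$-tree of degree $k-1$ has $k$ leaves on $k$ distinct components, whereas a $d$-tree of degree $\geq k$ meets at least $k+1$ distinct components; hence for a $k$-element subset $A$ of the components, the sublink $L_A$ is obtained from the trivial link $U_A$ by surgery along exactly those trees among $T_1,\dots,T_r$ whose leaves all lie on $A$ (a tree with a leaf off $A$ acquires a trivial leaf when the other components are deleted, and can be removed). Since $L$ is $(n,k)$-Brunnian, each $L_A$ is trivial, so for every $k$-subset $A$ the corresponding degree-$(k-1)$ $d$-forest surgers the trivial link to the trivial link. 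One then wants to conclude that this forest can be undone at the cost of introducing only trees of degree $\geq k$, and to transport the cancellation into the ambient link $L$ by locality of the clasper moves.

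The main obstacle is exactly this last step, and it is where the distinctness condition must be controlled. Undoing a degree-$(k-1)$ forest on the $k$ components of $A$ by the clasper moves (leaf slides, the IHX relation, and deletion of a contractible clasper) naturally produces correction trees of degree $\geq k$ that still meet only the $k$ components of $A$; such a tree necessarily has two leaves on a common component and is therefore \emph{not} a $d$-tree, so it does not by itself give $C_k^d$-equivalence. The heart of the proof will be to exploit the ambient link, where the remaining $n-k$ components are available as extra room, and to slide each repeated leaf onto a fresh component by a further sequence of clasper moves that creates only $d$-trees of degree $\geq k$, thereby converting every correction term into an admissible one. I expect this bookkeeping --- keeping all newly created trees simultaneously of degree $\geq k$ and of $d$-type throughout the cancellation --- to be the crux of the argument. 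Once it is in place, iterating over all $k$-subsets $A$ eliminates every degree-$(k-1)$ tree and presents $L$ by $d$-trees of degree $\geq k$, so that $L$ is $C_k^d$-equivalent to $U$, completing the induction.
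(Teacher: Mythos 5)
Your forward implication and the general clasper setup are fine, but the converse stalls at exactly the step you flag as the crux, and the mechanism you propose there does not exist. ``Sliding a repeated leaf onto a fresh component'' is not a legal clasper move: which component a leaf intersects is part of the surgery data, and transporting a leaf to a different component changes the resulting link. None of the available calculus (leaf slides as in Lemma~\ref{sliding}, edge crossing changes as in Lemma~\ref{cc}, IHX, zip) converts a tree with two leaves on a common component into $d$-trees. Such repeated-index trees are genuine obstructions, not bookkeeping artifacts: already for $k=1$, a $C_1$-tree with both leaves on one component changes the knot type of that component, which no sequence of $C_1^d$-moves can undo. Moreover, your reduction step has a second unaddressed interaction: when you undo the subforest $F_A$ inside the ambient link (rather than inside the sublink $U_A$), the trees of $F_A$ may link the other $n-k$ components and the other subforests $F_{A'}$, so the corrections are not supported on $A$ alone, and the triviality of $(U_A)_{F_A}$ as a $k$-component link does not control them. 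Finally, your induction is on $k$ with $n$ fixed, which only makes the $k$-component sublinks available as input; that input is too weak to kill the repeated-index corrections.

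The paper's proof avoids this difficulty entirely by a different induction, on $l=n-k$, combined with a maximality argument: let $m$ be maximal with $L$ obtained from the trivial link $O$ by surgery on a forest of $C_m^d$-trees, and suppose $m<k$. Rather than repairing bad correction trees after the fact, the paper only ever \emph{creates} trees in a way that keeps them $d$-type by construction: it splits off one component $O_i$ at a time by deleting the trees meeting $O_i$ and changing crossings between $O_i$ and edges of trees \emph{disjoint} from $O_i$; by Lemma~\ref{cc} these crossing changes cost trees whose index gains the new element $i$, hence $C_{m+1}^d$-trees, while the deleted trees' effect is recovered because $L\setminus O_i$ is $(n-1,k)$-Brunnian and hence, by the induction on $n-k$, $C_k^d$-trivial (this, with Lemma~\ref{index}, is where the Brunnian hypothesis actually enters --- through the delete-one-component sublinks, not the $k$-component ones). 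Iterating over $i=1,\dots,m+1$ localizes the degree-$m$ forest to index $\{1,\dots,m+1\}$; since $m+2\leq n$, the $(m+2)$nd component is disjoint from that forest, and splitting it off trades the entire degree-$m$ forest for $C_{m+1}^d$-trees, contradicting the maximality of $m$. To repair your proposal you would essentially have to import this mechanism: use the $(n-1,k)$-Brunnian sublinks via an induction on $n-k$, and raise degree only through crossing changes with components outside a tree's index, so that distinctness of leaves is preserved automatically rather than restored afterwards.
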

Theorem~\ref{brunnian} is thus a generalization of the fact that an $(n+1)$-component link is Brunnian if and only if it is $C_{n}^d$-equivalent 
to a trivial link \cite{H2,Miyazawa-Y}. 
This fact is the key ingredient in further works on finite type invariants of Brunnian links \cite{H2,HM,HM2}. 
These results generalize in a straightforward way to $(n,k)$-Brunnian links, see Appendix \ref{ftink}.

For $k=2$, we have a more general statement as follows.
\begin{theorem}\label{d-delta}
Two links with trivial components have common $2$-component sublinks if and only if they are $C_2^d$-equivalent.
\end{theorem}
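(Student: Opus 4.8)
The ``only if'' direction is immediate: if the three strands of a $C_2^d$-move belong to components $a,b,c$, then forgetting any one of these components, say $K_c$, turns the move into a $C_2$-move on only two strands, i.e. an ambient isotopy (the leaf that grabbed $K_c$ now bounds a disk disjoint from the rest). Hence every $2$-component sublink $K_i\cup K_j$ is preserved: if $\{i,j\}\subset\{a,b,c\}$ we are in the case just described, and otherwise the sublink is untouched. In particular single components are preserved as well, so $C_2^d$-equivalent links with trivial components automatically have common $2$-component sublinks.

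For the converse, suppose $L=K_1\cup\cdots\cup K_n$ and $L'=K'_1\cup\cdots\cup K'_n$ have trivial components and common $2$-component sublinks. Since $\mathrm{lk}(K_i,K_j)$ is an invariant of the sublink $K_i\cup K_j$, the two links share all pairwise linking numbers; by the Murakami--Nakanishi classification of links up to $C_2$-equivalence (delta moves), $L$ and $L'$ are therefore $C_2$-equivalent. I would recast this $C_2$-equivalence in the language of clasper calculus: $L'$ is obtained from $L$ by surgery along a disjoint union of $C_2$-trees (tree claspers with three leaves grabbing strands of $L$). The goal is then to replace this collection of claspers, modulo the hypotheses, by $C_2^d$-trees, i.e. trees whose three leaves lie on three distinct components.

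The plan is to sort the $C_2$-trees by the multiset of components their leaves meet and treat each type separately. A tree with all three leaves on a single component $K_i$ realizes a $C_2$-move supported on $K_i$ alone, so it only affects the knot type of $K_i$; since every component is trivial in both $L$ and $L'$, the total contribution of such trees to each component is an isotopy and they cancel. A tree with two leaves on $K_i$ and one on $K_j$ realizes a move supported on the sublink $K_i\cup K_j$; here I would invoke the hypothesis that $K_i\cup K_j$ and $K'_i\cup K'_j$ are ambient isotopic to conclude that the total contribution of these ``self'' trees to each $2$-component sublink is trivial, so they too cancel. What remains are the trees whose leaves lie on three distinct components, which are exactly $C_2^d$-trees, completing the reduction. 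To make ``cancellation'' precise I would use standard clasper moves---leaf slides along the unknotted components together with the IHX and STU-type relations---to commute and pair up claspers, noting that each rearrangement introduces only trees of strictly higher degree; such $C_m$-trees with $m\ge 3$ are realized by $C_2$-moves and, being finer, are absorbed by re-running the argument in a degree-graded induction.

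The main obstacle is precisely this interaction between the degree-$2$ bookkeeping and the higher-degree error terms: surgery on a union of claspers is not additive, so the separation into the three types above is only valid up to commutators living in higher degree, and one must check that these corrections never reintroduce an obstruction on a $2$-component sublink that the hypothesis has already killed. I expect the cleanest route is to isolate, as a preliminary lemma of the kind underlying Theorem~\ref{brunnian}, the statement that a self-tree on an unknotted component can be traded for $C_2^d$-trees at the cost of higher-degree trees only, and then to run the induction on degree, the triviality of the components guaranteeing that the leftover self-contributions vanish at each stage.
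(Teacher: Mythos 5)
Your ``only if'' direction is fine, and the reduction via Murakami--Nakanishi \cite{MN} to surgery along a disjoint union of $C_2$-trees is legitimate. The genuine gap is at the central step: the ``cancellation'' of trees whose index has size one or two is asserted rather than proved, and the mechanism you propose for it fails. Knowing that the total effect of the self-trees on $K_i$ (or on $K_i\cup K_j$) is an isotopy does not let you delete them: their leaves and edges are entangled with the other components, and disentangling costs crossing changes between edges and components, which by Lemma~\ref{cc} create new trees of higher degree \emph{with the same bad index} (e.g.\ a $C_3$-tree with index $\{i,j\}$). This is exactly where your degree-graded induction breaks down: a $C_m$-tree with $m\geq 3$ whose leaves meet at most two components is never a $C_2^d$-tree, and $C_m$-equivalence does not imply $C_2^d$-equivalence for any $m$ (for a knot, $C_2^d$-equivalence is just isotopy), so the error terms are never ``absorbed''; your phrase ``realized by $C_2$-moves and, being finer'' conflates $C_2$- with $C_2^d$-equivalence, and the distance between those two relations is precisely what the theorem measures. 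Your proposed preliminary lemma is moreover false as stated: a single self-tree with index $\{i,j\}$ generally changes the isotopy type of the sublink $K_i\cup K_j$ (it can turn the unlink into the Whitehead link), whereas $C_2^d$-moves preserve all $2$-component sublinks, so it cannot be traded for $C_2^d$-trees at the cost of higher-degree trees unless those higher trees again have index contained in $\{i,j\}$ --- and the regress never terminates. Proposition~\ref{counter-ex}~(2) is a concrete warning here: those two links have common $2$-component sublinks, hence equal linking numbers, hence are $C_2$-equivalent by \cite{MN}, yet they are not $C_2^d$-equivalent; your index-size-two cancellation invokes only the sublink hypothesis, which holds there.

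The paper's proof avoids self-trees altogether instead of cancelling them, and it uses the triviality of the components twice, in a global way. First, by Lee--Jin \cite{LJ}, a link with trivial components admits a diagram in which no component has a self-crossing, so $L$ (and likewise $L'$) is obtained from the trivial link $O$, positioned with $O_i\subset{\Bbb R}^2\times\{i\}$, by surgery along $C_1^d$-trees of index $\{i,j\}$ only: degree one, two distinct components, no $C_2$-trees and no self-trees at any stage. Each family $E_{ij}$ is then gathered, modulo $C_2^d$-equivalence via Lemmas~\ref{sliding} and~\ref{cc}, into a neighborhood of a vertical arc $\alpha_{ij}$. Second, the hypothesis yields an ambient isotopy between $(O_1\cup O_2)_{E_{12}}$ and $(O_1\cup O_2)_{E'_{12}}$, and the triviality of $O_1\cup O_2$ is used to confine this isotopy to a neighborhood of $D_1\cup D_2\cup\alpha_{12}$, where $D_1\cup D_2$ are disjoint disks bounded by the two components; the interaction of the confined isotopy with the remaining claspers then reduces to a finite list of moves (leaf slides, edge crossings, and passing $O_1\cup O_2$ across other trees), each achieved by $C_2^d$-moves. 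To salvage your plan you would need a replacement lemma of this confined-isotopy type; in particular the triviality of the components must enter the index-size-two step, not only the index-size-one step, as Proposition~\ref{counter-ex}~(2) shows.
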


Although the hypotheses in Theorem \ref{d-delta}, that $k=2$ and that each link component is trivial, may seem restrictive, 
they turn out to be both necessary to give a positive answer to our question. 
Indeed, we have the following.

\begin{proposition}\label{counter-ex}
(1)~For $k\geq 3$, there exists a pair of links with trivial components, 
which have common $k$-component sublinks but are not $C_k^d$-equivalent.  \\
(2)~There exists two links with one nontrivial component, 
which have common $2$-component sublinks and are not $C_2^d$-equivalent. 
\end{proposition}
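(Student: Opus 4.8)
The plan is to prove both statements by the same scheme: for each part I will produce an explicit pair of links $L,L'$ that visibly share all $k$-component sublinks, and then exhibit an invariant that is preserved by every $C_k^d$-move but separates $L$ from $L'$. The constructions will be local, so that the common-sublink hypothesis is clear by inspection; the real work is in producing and evaluating the distinguishing invariant. The first task is to pin down which invariants can possibly serve. Since a $C_k^d$-move is supported on $k+1$ pairwise distinct components, deleting any one leaf of the underlying clasper trivializes it, so every Milnor invariant $\bar\mu(I)$ with at most $k$ distinct indices is determined by the $k$-component sublinks and hence agrees on $L$ and $L'$; these are useless. The length-$(k+1)$ invariants $\bar\mu(i_0\cdots i_k)$ on distinct indices are exactly the first-order effect of a $C_k^d$-move and can be altered, so they give no obstruction either. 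This forces the separating invariant to be of higher order, namely a Milnor invariant of length $\ge k+2$ (a finite-type invariant of degree $\ge k+1$), whose value records how the common sublinks are assembled and whose indeterminacy is governed by the shared lower-length data.

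For part (1), with $k\ge 3$, I would build $L$ on $k+1$ unknotted components whose $k$-component sublinks are nontrivial but all isotopic to one fixed $(k,k-1)$-Brunnian model, so that the length-$k$ Milnor data is nonzero yet common. I would then obtain $L'$ from $L$ by a single clasper surgery of degree $k+1$ carried out on distinct components. Such a surgery preserves every $k$-component sublink, so $L$ and $L'$ automatically share them, and the goal is to detect that this surgery is \emph{not} a composition of $C_k^d$-moves. I would detect it by a length-$(k+2)$ Milnor invariant (e.g.\ one repeating a single index), computing its indeterminacy subgroup from the common sublink data and showing that this subgroup does not absorb the change produced by the degree-$(k+1)$ surgery. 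That this collapses when all $k$-sublinks are trivial is precisely Theorem~\ref{brunnian}, and that it collapses for $k=2$ is Theorem~\ref{d-delta}; this is exactly why the sublinks must be taken nontrivial and $k\ge 3$ here.

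For part (2), I would keep $k=2$ with three components, take $K_1$ a fixed nontrivial knot and $K_2,K_3$ unknotted, and arrange all pairwise linking numbers, hence all $2$-component sublinks, to coincide for $L$ and $L'$. Because a $C_2^d$-move fixes each individual component and each $2$-sublink, the supply of $C_2^d$-invariants is strictly larger than in the all-trivial-component setting of Theorem~\ref{d-delta}: the knotting of $K_1$ can be coupled to its linking with $K_2\cup K_3$ to give a degree-$3$ invariant of the triple that is genuinely three-component, i.e.\ not a function of the $2$-sublinks alone, while still being unchanged by $C_2^d$-moves. I would then construct $L$ and $L'$ so that this coupled invariant differs although all $2$-sublinks agree; the non-triviality of $K_1$ is exactly what makes the coupling visible, and replacing it by an unknot destroys the distinction, consistently with Theorem~\ref{d-delta}.

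The main obstacle in both parts is the same: showing that the chosen higher invariant is truly a $C_k^d$-invariant. This amounts to controlling its indeterminacy, that is, proving that every quantity by which a single $C_k^d$-move can change it already lies in the subgroup generated by the shared sublink invariants, and then checking by direct computation that the two constructed links land in different classes. I expect this bookkeeping of the indeterminacy, rather than the construction of the links or the final numerical evaluation, to be the crux of the argument.
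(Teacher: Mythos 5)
Your proposal has a concrete, fatal flaw in part~(1), and both parts defer exactly the step that constitutes the proof. For part~(1): you propose to obtain $L'$ from $L$ by ``a single clasper surgery of degree $k+1$ carried out on distinct components,'' i.e.\ surgery along a $C_{k+1}^d$-tree. But by Lemma~\ref{index} of this paper, surgery along a $C_{k+1}^d$-tree $T$ (whose index has $k+2$ elements) can be realized by surgery along $C_k$-trees with index any chosen $(k+1)$-element subset $S\subset\mathrm{index}(T)$; a simple $C_k$-tree with $|S|=k+1$ has all its $k+1$ leaves on distinct components, hence is a $C_k^d$-tree. So your $L$ and $L'$ would be $C_k^d$-equivalent by construction, and no invariant whatsoever can separate them. (Note also the internal inconsistency: on a $(k+1)$-component link a degree-$(k+1)$ tree has $k+2$ leaves and cannot have all leaves on distinct components; if you instead allow a repeated index, then your proposed detector --- a length-$(k+2)$ Milnor invariant with a repeated index --- is not shown to be a $C_k^d$-invariant, and controlling its indeterminacy, which you explicitly postpone, is the entire difficulty.) Your preliminary reasoning also overreaches in asserting that the separating invariant is \emph{forced} to be a higher Milnor invariant: the paper separates its part~(1) pair $L_n, L'_n$ (links with trivial components and common $(n-1)$-component sublinks) by the Arf invariant, using the fact that for $k\geq 3$ a $C_k$-move factors through pass-moves, which preserve Arf \cite{MN}. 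This gives an obstruction to $C_k$-equivalence (a fortiori $C_k^d$-equivalence) with no indeterminacy bookkeeping at all.

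For part~(2) your intuition --- couple the knotting of the nontrivial component to the linking data of the other components --- matches the paper's in spirit, but you never construct the invariant or prove its $C_2^d$-invariance, which again is the whole content. The paper's solution is not a finite-type ``degree-$3$'' invariant but the quantity $\lambda_F(K_i,K_j)\in\mathbb{Q}$, defined via the Goeritz matrix of a surface $F$ bounded by the nontrivial component $K$, which mod~$1$ computes the linking number of lifts $\widetilde{K_i},\widetilde{K_j}$ in the double branched cover of $S^3$ along $K$. Invariance (Proposition~\ref{inv}) rests on the geometric observation that $C_k^d$-equivalence for $k\geq 2$ makes $K_i\cup K_j$ homotopic to $K'_i\cup K'_j$ in the complement of $K$, so the lifted linking number mod~$1$ is unchanged; the explicit pair in Figure~\ref{example2} then gives values $0$ and $-1/3$. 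Your plan, as written, identifies where the difficulty lies but supplies neither a valid construction (part~(1) collapses under Lemma~\ref{index}) nor the invariance arguments, so it does not constitute a proof.
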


The rest of the paper is organized as follows. 
In Section 2, we review some elements of the theory of claspers.
We prove Theorem~\ref{brunnian}, Theorem~\ref{d-delta} and Proposition~\ref{counter-ex} in Sections 3, 4 and 5 respectively.   
The paper is concluded by several straightforward extensions of known results on Brunnian links to $(n,k)$-Brunnian links, see Appendix A. 

%\bigskip
{\section{Claspers} }
 
We now recall several notions from clasper theory for links \cite{H}.  
In this paper, we only need the notion of tree claspers.  
For a general definition, we refer the reader to \cite{H}.  

\medskip
Let $L$ be a link in $S^3$.  An embedded disk $F$ in $S^3$ 
is called a {\em tree clasper} for $L$ if 
it satisfies the three following conditions: \\% (1), (2) and (3):\\
(1) $F$ is decomposed into disks and bands, called {\em edges}, each of which 
connects two distinct disks.\\
(2) The disks have either 1 or 3 incident edges, called {\em leaves} or 
{\em nodes} respectively.\\
(3) $L$ intersects $F$ transversely and the intersections are contained 
in the union of the interior of the leaves.  \\
%We will make use of the drawing convention for claspers of \cite[Fig. 7]{H}.    
(In \cite{H}, a tree clasper and a leaf are called 
a {\em strict tree clasper} and a {\em disk-leaf} respectively.) 

A tree clasper is \emph{simple} if each leaf intersects $L$ at one point.  
In the following, \emph{we will implicitely assume that all tree claspers are simple.}

The \emph{degree} of a tree clasper is the number of the leaves \emph{minus} $1$.  
A degree $k$ tree clasper is called a {\em $C_k$-tree} (or a {\em $C_k$-clasper}).  

Given a $C_k$-tree $T$ for a link $L$, there is a procedure to construct  a 
framed link $\gamma(T)$ in a regular neighborhood of $T$. 
\emph{Surgery along $T$} means surgery along $\gamma(T)$.  
Since there exists an orientation-preserving homeomorphism, fixing the boundary, 
from the regular neighborhood $N(T)$ of $T$ to the manifold $N(T)_{T}$ 
obtained from $N(T)$ by surgery along $T$, 
surgery along the $C_k$-tree $T$ can be regarded as a local move on $L$.  
We say that the resulting link $L_T$ is {\em obtained from $L$ by surgery along $T$}. 
In particular, surgery along a $C_k$-tree illustrated in Figure~\ref{milnor-tangle} 
is equivalent to band-summing a copy of the $(k+1)$-component 
Milnor link (see \cite[Fig.~7]{Milnor}), 
and is equivalent to a $C_k$-move as defined in the introduction (Figure~\ref{cnm}).  
Similarly, for a disjoint union $T_1\cup \cdots \cup T_m$ of tree claspers for $L$,  we can define 
$L_{T_1\cup\cdots\cup T_m}$ as the link obtained by surgery along $T_1\cup\cdots\cup T_m$.
%We often regard $L\cup T_1\cup\cdots\cup T_m$ as $L_{T_1\cup\cdots\cup T_m}$.

\begin{figure}[!h]
\includegraphics[trim=0mm 0mm 0mm 0mm, width=.7\linewidth]
{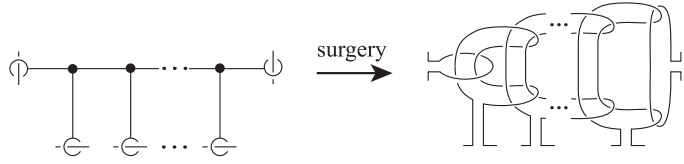}
\caption{Surgery along a $C_k$-tree.} \label{milnor-tangle}
\end{figure}

It is known that the $C_k$-equivalence as defined in Section 1 coincides with 
the equivalence relation on links generated by surgery along $C_k$-trees and ambient isotopy
%.  
%Two links $L$ and $L'$ are $C_k$-equivalent if and only if 
%there is a disjoint union of $C_k$-trees $G_1\cup\cdots\cup G_m$ such that 
%$L'$ is ambient isotopic to $L_{G_1\cup\cdots\cup G_m}$ 
\cite[Thm.~3.17]{H}. 

Let $L=K_1\cup\cdots\cup K_n$ be an $n$-component link.  
For a $C_k$-tree $T$ for $L$, the set $\{i~|~T\cap K_i\neq\emptyset\}$ is called the 
{\em index} of $T$, and is denoted by $\mathrm{index}(T)$. 

A $C_k$-tree $T$ for $L$ is a {\em $C_k^d$-tree} if it satisfies that 
$|\mathrm{index}(T)|=k+1$, that is, if $|\mathrm{index}(T)|$ is the number of leaves of $T$. 

By arguments similar to those in the proof of \cite[Thm.~3.17]{H}, we have 
%Note 
that the $C_k^d$-equivalence defined in Section 1 coincides with the equivalence relation on links 
generated by surgery along $C^d_k$-trees and ambient isotopy.  
%By arguments similar to those in the proof of \cite[Thm.~3.17]{H}, we have 
%that two links $L$ and $L'$ are $C_k^d$-equivalent if and only if 
%there is a disjoint union of $C_k^d$-trees $T_1\cup\cdots\cup T_m$ such that 
%$L'$ is ambient isotopic to $L_{T_1\cup\cdots\cup T_m}$. 

Although the following three lemmas follow from results of \cite{akira}, the main ideas of proofs are due to Habiro \cite{H}. 

\begin{lemma}\cite[Rem. 2.3]{akira} \label{sliding}
Let $T_1$ and $T_2$ be disjoint $C_1^d$-trees for a link $L$. 
Suppose $\mathrm{index}(T_1)=\{i,j\}$, $\mathrm{index}(T_2)=\{i,k\}$ and $j\neq k$. 
Let $T_1'$ be obtained from $T_1$ by sliding the leaf intersecting the $i$th component of $L$ over the 
leaf of $T_2$ intersecting the $i$th component, see Figure~\ref{figsliding}.
Then $L_{T_1\cup T_2}$ and $L_{T'_1\cup T_2}$ are $C_2^d$-equivalent. \\
\begin{figure}[!h]
\includegraphics[trim=0mm 0mm 0mm 0mm, width=.35\linewidth]
{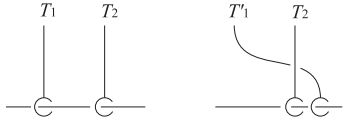}
  \caption{Sliding a leaf over another leaf.}\label{figsliding}
 \end{figure}
\end{lemma}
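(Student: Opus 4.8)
The plan is to realize the leaf slide as a standard move in Habiro's calculus of claspers and to identify the resulting ``correction term'' as a single $C_2^d$-tree. Recall that surgery along a $C_1^d$-tree with index $\{a,b\}$ clasps together the components $K_a$ and $K_b$ through its two leaves. The heart of the argument is the \emph{leaf-sliding move}: when the leaf of $T_1$ meeting $K_i$ is pushed along $K_i$ past the leaf of $T_2$ meeting $K_i$, the two surgeries $L_{T_1\cup T_2}$ and $L_{T'_1\cup T_2}$ differ by surgery along a new tree clasper $T_3$, obtained by joining $T_1$ and $T_2$ through a new node at the location where the slide takes place.

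First I would set up the local picture near the two leaves lying on $K_i$, arranging that the remainder of $T_1$ (its edge and its $K_j$-leaf) and the remainder of $T_2$ (its edge and its $K_k$-leaf) stay fixed and disjoint throughout the slide. I would then invoke Habiro's local description of how surgery changes when one leaf crosses another, which returns the original pair $T_1\cup T_2$ together with the single extra clasper $T_3$; all remaining discrepancies are ambient isotopies and moves that do not alter the surgered link.

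The decisive step is to check that $T_3$ is a $C_2^d$-tree. By construction $T_3$ is formed by joining the degree-$1$ tree $T_1$ and the degree-$1$ tree $T_2$ at a new node, so it has three leaves and hence degree $2$; these three leaves lie respectively on $K_j$ (inherited from $T_1$), on $K_k$ (inherited from $T_2$), and on $K_i$ (a new leaf created at the slide, meeting $K_i$ transversely in a single point). Thus $\mathrm{index}(T_3)=\{i,j,k\}$, which has exactly $3=\deg(T_3)+1$ elements, since $i,j,k$ are pairwise distinct by hypothesis. Hence $T_3$ is a simple $C_2^d$-tree, and surgery along it is a $C_2^d$-move. (As a sanity check, a leaf slide preserves all pairwise linking numbers, so the correction must have degree at least $2$; a correction of index $\{i,j,k\}$ is precisely what governs the triple Milnor number, consistent with a genuine $C_2^d$-tree.)

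Combining these observations, $L_{T'_1\cup T_2}$ is obtained from $L_{T_1\cup T_2}$ by surgery along the single $C_2^d$-tree $T_3$, so the two links are $C_2^d$-equivalent, as claimed. The main obstacle I expect is the first, geometric step: pinning down exactly which auxiliary claspers appear when the leaf crosses the band of $T_2$'s leaf, and confirming that besides $T_3$ no lower-degree or same-component correction terms survive --- equivalently, that the new leaf genuinely sits on $K_i$ and that the other two leaves of $T_3$ are precisely the original $K_j$- and $K_k$-leaves. This is where I would lean most heavily on the local models of \cite{H}.
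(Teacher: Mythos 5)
Your argument is correct and is essentially the intended one: the paper offers no proof of this lemma, citing \cite[Rem.~2.3]{akira} (with the ideas attributed to Habiro \cite{H}), and the argument behind that citation is exactly yours --- the slide is realized by the commutator of the two clasper surgeries, whose correction term is the Y-clasper obtained by joining $T_1$ and $T_2$ at a new node carrying a meridinal leaf on the $i$th component, hence a single simple $C_2$-tree with index $\{i,j,k\}$, i.e.\ a $C_2^d$-tree. Your final worry resolves itself in this low-degree case: since the slide is supported in a ball containing only the two simple leaves and the strand of the $i$th component, the local model gives the single correction tree exactly, with no lower-degree or same-component residue.
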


\begin{lemma}[{\cite[Prop.~2.10]{akira}}, cf. {\cite[Prop.~1.3]{FY}}]
\label{index} 
If an $n$-component link $L'$ is obtained from $L$ by surgery along a $C_k^d$-tree $T$, 
then for any subset $S$ of $\mathrm{index}(T)$ with $|S|\geq 2$, 
there exists a disjoint union of $C_{|S|-1}$-trees for $L$ with index $S$, 
such that $L'$ is obtained from $L$ by surgery.  
\end{lemma}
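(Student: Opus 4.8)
The plan is to work entirely inside Habiro's clasper calculus \cite{H} and to argue by induction on the number $r=|\mathrm{index}(T)|-|S|=(k+1)-|S|$ of indices that must be eliminated. When $r=0$, that is when $S=\mathrm{index}(T)$, there is nothing to prove: $T$ itself is a single $C_{|S|-1}^d$-tree with index $S$ and $L'=L_T$. For the inductive step it suffices to explain how to remove one index at a time, so I would isolate the following reduction as the engine of the proof: given a $C_p^d$-tree $T'$ for a link with index $J$ and a chosen element $j\in J$, surgery along $T'$ is reproduced by surgery along a disjoint union of $C_{p-1}^d$-trees with index $J\setminus\{j\}$. Granting this, applying it successively to the elements of $\mathrm{index}(T)\setminus S$ produces, after $r$ steps, the required disjoint union of $C_{|S|-1}^d$-trees with index $S$.

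For this one-step reduction I would focus on the leaf $l_j$ of $T'$ lying on the component $K_j$, together with the edge $e$ and the node $v$ adjacent to it; the node $v$ carries two further edges leading to subtrees $Y_1$ and $Y_2$. The core geometric move is to delete $l_j$ and $e$ and to join $Y_1$ and $Y_2$ along a single new edge. A leaf count shows that this drops the degree by exactly one and removes $j$ from the index, which matches the required degree $p-1$ and index $J\setminus\{j\}$. To justify it at the level of surgeries I would slide $l_j$ along $K_j$ and commute it past the neighbouring leaves, controlling each intermediate step by the leaf-slide Lemma~\ref{sliding}; the effect is to transfer the $l_j$-branch and absorb the node $v$, up to a controlled collection of trees of strictly higher degree whose indices remain inside the current index set.

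The main obstacle is precisely this last point: making the node absorption rigorous while keeping the bookkeeping under control. Concretely, I expect the difficulty to lie in (i) verifying that the trees produced by the slides never re-introduce the index $j$ and can be arranged to have index contained in the set surviving to the next stage of the induction, (ii) arranging all resulting claspers to be simple and pairwise disjoint after the isotopies, and (iii) checking that the higher-degree corrections are compatible with the equivalence in which the statement is phrased, so that they are absorbed rather than accumulated. Since these are exactly the manipulations carried out in \cite{akira} following Habiro's methods, I would model the detailed calculus — the leaf slides, the edge contraction at $v$, and the resulting tree-surgery identities — on the arguments there, and the bulk of the work would be the careful tracking of indices and disjointness through the induction rather than any single isolated hard idea.
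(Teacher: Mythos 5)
Your inductive frame (peel off one index at a time, base case $S=\mathrm{index}(T)$) matches the structure of the argument in the literature; note that the paper itself gives no proof of this lemma, quoting it from \cite[Prop.~2.10]{akira}, so the comparison is with that source. But your execution has two genuine gaps. First, the lemma asserts an \emph{exact} realization: $L'$ \emph{equals} (up to ambient isotopy) the result of surgery along a union of $C_{|S|-1}$-trees with index $S$. Your one-step reduction is justified only ``up to a controlled collection of trees of strictly higher degree'' --- i.e., by the equivalence-type moves of Lemmas~\ref{sliding} and \ref{cc}, which hold only modulo surgery along higher-degree trees. Those corrections can never be absorbed: a correction tree of degree $>|S|-1$ with index contained in $S$ necessarily meets some component more than once, so it is not of the form the conclusion allows, and no stage of your induction removes it. Your difficulty (iii) is therefore not a bookkeeping issue but a fatal obstruction to the plan as stated; at best it would yield a weaker statement up to higher $C$-equivalence.

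Second, your ``core geometric move'' --- delete the leaf $l_j$ and its edge and join $Y_1$ to $Y_2$ by a \emph{single} new edge --- is false even at leading order. Test it on a $C_2^d$-tree $T$ with $\mathrm{index}(T)=\{1,2,3\}$ and $S=\{1,2\}$: the contracted tree is a clasp between $K_1$ and $K_2$, whose surgery changes $\mathrm{lk}(K_1,K_2)$ by $\pm 1$, whereas surgery along $T$ (a $C_2$-move) preserves linking numbers; so the two surgeries do not even agree modulo $C_2$-equivalence. The correct mechanism, used in \cite{akira} following Habiro's exact clasper calculus \cite{H} (zip construction, leaf splitting --- not the up-to-higher-degree lemmas), keeps the memory of $K_j$: one replaces $T$ by \emph{two} parallel copies of the contracted tree whose new edges run around the strand of $K_j$ in cancelling directions, so that the pair forms a meridian commutator --- this is exactly the presentation of the Borromean-type insertion as a commutator of two clasps about the third strand, and it is an equality of surgery results, not an equivalence. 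Since this move takes place inside a regular neighborhood of $T$, your worries (i) and (ii) about re-introducing the index $j$ and about disjointness disappear automatically; iterating over the $r=k+1-|S|$ indices to be eliminated produces $2^{r}$ disjoint $C_{|S|-1}$-trees with index exactly $S$, as required.
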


Combining the latter with \cite[Rem.~2.3]{akira}, we have the following.  
\begin{lemma}\label{cc} %{cc}
Let $T$ be a $C_k$-tree for a link $L$, and let $T'$ (resp. $T''$) 
be obtained from $T$ by changing a crossing of an edge of $T$ and the $i$th component of $L$ (resp. an edge of a $C_1$-tree $G$ 
intersecting the $i$th component). 
Then $L_T$ is $C_{k+1}$-equivalent to $L_{T'}$ 
(resp. $L_{T\cup G}$ is $C_{k+1}$-equivalent to $L_{T''\cup G}$), and the $C_{k+1}$-equivalence 
is realized by surgery along  $C_{k+1}$-trees with index $\mathrm{index}(T)\cup\{i\}$. 
\end{lemma}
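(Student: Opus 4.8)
The plan is to realize the crossing change by an auxiliary clasper surgery and then eliminate the non-standard leaf it creates, reading off the degree and index of the correction from Lemmas~\ref{sliding} and~\ref{index}. I treat the case of $T'$ first. A crossing change between an edge $e$ of $T$ and the component $K_i$ can be realized by surgery along a $C_1$-tree $C$ one of whose leaves is a meridian of $K_i$ and the other a meridian of the band $e$; thus, up to ambient isotopy, $L_{T'}=L_{T\cup C}$. The whole difficulty is now concentrated in the leaf of $C$ that encircles an edge of $T$ rather than a component of $L$: the task is to trade it for honest leaves lying on components, at the cost of raising the degree by one.

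To do this I would push that leaf along $e$ and across the nodes and leaves of $T$. The elementary move---sliding one leaf across another---is exactly Lemma~\ref{sliding}, which shows that each such slide alters the link only up to $C_2^d$-equivalence. Iterating along the edges of $T$ and using Lemma~\ref{index} to express the surgeries along the resulting subtrees as unions of trees of prescribed index, the meridian-of-edge leaf is replaced, modulo $C_{k+1}$-equivalence, by contributions attached to the components in $\mathrm{index}(T)$. Since the two strands running along the band $e$ are anti-parallel, their contributions cancel at the level of degree-$\le k$ invariants, so the first surviving correction is a disjoint union of $C_{k+1}$-trees, each meeting $K_i$ (coming from $C$) together with the components of $\mathrm{index}(T)$ (coming from $T$); that is, each has index $\mathrm{index}(T)\cup\{i\}$. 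This yields $L_T\sim_{C_{k+1}}L_{T'}$ realized by such trees.

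For the second statement, where the crossing is between an edge of $T$ and an edge of the $C_1$-tree $G$, I would argue in the same way, realizing the crossing change by a $C_1$-tree $C$ that is a meridian of the two edges. Because $G$ has a leaf on $K_i$ and only one edge, sliding the corresponding leaf of $C$ along that edge converts it into a meridian of $K_i$ without raising the degree; this reduces the problem to the first case, now with $K_i$ playing the role formerly played by the edge of $G$. The two slides contribute the index $\{i\}$ from $G$ and $\mathrm{index}(T)$ from $T$, so the realizing trees are again $C_{k+1}$-trees with index $\mathrm{index}(T)\cup\{i\}$.

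The main obstacle is the bookkeeping in the sliding step of the first case: one must check that no correction of degree $\le k$ survives---i.e. that the linking-number-level effect of passing the doubled edge across $K_i$ cancels---and that every surviving tree has index exactly $\mathrm{index}(T)\cup\{i\}$ rather than a smaller or different set. This is precisely the point at which Lemma~\ref{sliding} (controlling a single elementary slide) and Lemma~\ref{index} (organizing the subtree surgeries by index) must be combined, and keeping the framing and orientation data straight through the iterated slides is the delicate part.
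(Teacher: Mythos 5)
Your opening move is the standard one and matches the starting point of the argument the paper invokes (via \cite[Rem.~2.3]{akira}, with the ideas going back to Habiro): realize the crossing change by surgery along an auxiliary $C_1$-tree $C$ with one leaf a meridian of $K_i$ and the other a meridian of the edge $e$, so that $L_{T'}=L_{T\cup C}$. But the resolution of the meridian-of-edge leaf is where your proof breaks down. The correct mechanism is Habiro's zip construction: the clasper $C$ is absorbed into $T$ by inserting a new node on the edge $e$ and attaching the $K_i$-meridian leaf there, producing directly a \emph{single} tree of degree $\deg(T)+\deg(C)=k+1$ whose leaves are those of $T$ together with a meridian of $K_i$ --- whence the index $\mathrm{index}(T)\cup\{i\}$, with any residual corrections again of this form. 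No cancellation argument is needed or available. Your substitute --- pushing the leaf along $e$ ``across the nodes and leaves of $T$'' using Lemma~\ref{sliding} --- does not work: Lemma~\ref{sliding} concerns sliding a leaf of a $C_1^d$-tree over a leaf of another $C_1^d$-tree along a common \emph{link component}, with only $C_2^d$-control; it says nothing about a leaf that is a meridian of a clasper edge, and pushing such a leaf past a trivalent node turns it into a loop encircling the two other edges, a configuration none of the quoted lemmas addresses. Moreover, iterating moves each controlled only up to $C_2^d$-equivalence can never assemble into a $C_{k+1}$-equivalence, and your patch --- ``the contributions cancel at the level of degree-$\le k$ invariants'' --- is both unproven and insufficient: for links, agreement of finite type invariants of degree $\le k$ does not imply $C_{k+1}$-equivalence, so even if established it would not close the gap. (There are also no doubled anti-parallel link strands running along $e$ before surgery; you appear to be conflating the clasper picture with the surgered link $\gamma(T)$.)

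The second case contains a concrete error. You claim that sliding the meridian-of-edge-of-$G$ leaf along the edge of $G$ ``converts it into a meridian of $K_i$ without raising the degree.'' It does not: sliding that meridian past the leaf of $G$ (which intersects $K_i$ once) yields a leaf encircling \emph{two anti-parallel strands} of $K_i$ --- its linking number with $K_i$ is $0$, whereas a meridian has linking number $\pm 1$, so no isotopy in the complement can make them agree. Resolving such a null-homologous leaf requires the leaf-splitting move of clasper calculus, which produces two trees of degree $k+1$ with index $\mathrm{index}(T)\cup\{i\}$ plus higher-degree corrections; this is precisely the degree and index bookkeeping your reduction skips, and it is why the statement cannot be ``reduced to the first case'' for free. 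Finally, note that Lemma~\ref{index} runs in the wrong direction for your purposes: it decomposes a $C_k^d$-tree surgery into lower-degree trees with \emph{smaller} index, whereas here the index must \emph{grow} by $\{i\}$ --- in the paper's own derivation it is combined with \cite[Rem.~2.3]{akira}, which supplies exactly the zip-construction step your proposal lacks.
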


\section{Proof of Theorem~\ref{brunnian}}

As noted in the introduction, the `if' part of the statement is obvious, so we only need to prove the 'only if' part. 

Let $L=K_1\cup\cdots\cup K_n$ be an $n$-component link 
with same $k$-component sublinks as the trivial $n$-component link $O=O_1\cup\cdots\cup O_n$.
Set $l=n-k$. 
We will show that $L$ is $C_k^d$-equivalent to $O$ by induction on $l$. 

If $l=0$, then $L$ is trivial and the result is obviously true. 

Suppose that $l>0$. 
Let $m$ be the maximum integer so that $L$ is $C_m^d$-equivalent to $O$ 
, i.e., so that there is a disjoint union $F_m$ of $C_m^d$-trees for $O$ such that 
the link $O_{F_m}$ is ambient isotopic to $L$.  
If $m=k$, then we have the result. Hence we assume that $m<k$ and show that this leads to a contradiction. 

Observe that $O_{F_m}$ can be deformed into the split union of $O_1$ and $L\setminus O_1$ by 
deleting all $C_m^d$-trees in $F_m$ intersecting $O_1$ and performing several 
crossing changes between $O_1$ and edges of $C_m^d$-trees disjoint from $O_1$.
By Lemma \ref{cc}, these crossing changes are realized by surgery along $C_{m+1}^d$-trees intersecting $O_1$. 
Since $L\setminus O_1$ is an $(n-1,k)$-Brunnian link, 
by induction hypothesis, it is $C_k^d$-equivalent to a trivial link.  
It follows by Lemma~\ref{index} that 
$L$ is obtained from $O$ by surgery along a disjoint union $F_m^1$ of 
$C_m^d$-trees intersecting $O_1$ and $C_k^d$-trees. 

Similarly, $O_{F^1_m}$ can be deformed into the split union of $O_2$ and $L\setminus O_2$ by 
deleting all tree claspers in $F^1_m$ intersecting $O_2$ and performing several 
crossing changes between $O_2$ and edges of tree claspers in $F^1_m$ disjoint from $O_2$. 
Since $L\setminus O_2$ is an $(n-1,k)$-Brunnian link, 
by induction hypothesis, it is $C_k^d$-equivalent to a trivial link. 
It follows by Lemmas~\ref{cc} and \ref{index} that 
$L$ is obtained from $O$ by surgery along a disjoint union $F_m^2$ of 
$C_m^d$-trees, each intersecting both $O_1$ and $O_2$, and $C_k^d$-trees. 

Repeating this argument inductively, 
we show that $L$ is obtained from $O$ by surgery along a disjoint union $F_m^{m+1}$ of 
$C_m^d$-trees with index $\{1,2,...,m+1\}$, and $C_k^d$-trees. 
Since $m\leq k-1\leq n-2$, $L$ has a $(m+2)$th component, which is 
disjoint from the $C_m^d$-trees in $F_m^{m+1}$. 
We can deform $O_{F^{m+1}_m}$ into the split union of $O_{m+2}$ and $L\setminus O_{m+2}$ by 
deleting all $C_k^d$-trees in $F^{m+1}_m$ intersecting $O_{m+2}$ and several 
crossing changes between $O_{m+2}$ and edges of trees clasper in $F_m^{m+1}$.
Since $L\setminus O_{m+2}$ is an $(n-1,k)$-Brunnian link, 
it is $C_k^d$-equivalent to a trivial link. 
Lemmas~\ref{cc} and \ref{index} then imply that there is disjoint union of $C_{m+1}^d$-trees $F'$ for $O$ such 
that $O_{F'}$ is ambient isotopic to $L$. 
This contradicts the definition of $m$, and thus proves that $L$ is $C_k^d$-equivalent to $O$.

\section{Proof of Theorem~\ref{d-delta}}

The `if' part of the statement is obvious, so we only need to prove the 'only if' part. 

Let $L$ be an $n$-component link with trivial components. 
By \cite{LJ}, there is a diagram of $L$ in ${\Bbb R}^2\times \{0\}$ such that each component has no self crossing.  
For our purpose, it is convenient to further assume that this diagram contains crossings between the $i$th and $j$th components 
for each pair of distinct integers $i, j$ in $\{1,...,n\}$ 
(this is of course possible by a sequence of Reidemeister moves). 

By a sequence of crossing changes, we can deform $L$ into the trivial link $O=O_1\cup\cdots\cup O_n$ 
such that $O_i$ lies in ${\Bbb R}^2\times\{i\}$ $(i=1,...,n)$ and the projections of 
$L$ and $O$ coincide. 
Hence, for each pair of distinct integers $i<j$ in $\{1,...,n\}$, there is a disjoint union $F_{ij}$ 
of $C_1^d$-trees with index $\{i,j\}$ such that $L$ is ambient isotopic to $O_{{\bigcup_{i<j}} F_{ij}}$
(since each crossing in $L$ where the $j$th component underpasses the $i$th one is achieved by surgery on $O$ along such a $C_1^d$-tree).  
Let $p$ be a crossing in the diagram of $O_i\cup O_j$, and let $\alpha_{ij}$ be the arc $p\times[i,j]$. 
(Note that such a crossing always exists by our assumption on the diagram of $L$). 
Since the edge of each $C_1^d$-tree in $F_{ij}$ is contained in ${\Bbb R}^2\times (i,j)$, we have that 
$O_i\cup O_j\cup F_{ij}$ is ambient isotopic to $O_i\cup O_j\cup E_{ij}$, where  
$E_{ij}$ is a disjoint union of $C_1^d$-trees with index $\{i,j\}$ and 
contained in a regular neighborhood of $\alpha_{ij}$. 
It follows, by Lemmas \ref{cc} and \ref{sliding}, that $O_{{\bigcup_{i<j}} F_{ij}}$ is $C_2^d$-equivalent to 
$O_{\bigcup_{i<j}E_{ij}}$.  

Now, let $L'$ be an $n$-component link with trivial components and with same $2$-component sublinks as $L$. 
By the same argument as above, there exists disjoint unions $E'_{ij}$ of $C_1^d$-trees for $O$ with index $\{i,j\}$ 
contained in a regular neighborhood of an arc $\alpha'_{ij}$ in ${\Bbb R}^2\times [i,j]$ $(1\leq i<j\leq n)$ such that 
$L'$ is $C_2^d$-equivalent to $O_{\bigcup_{i<j}E'_{ij}}$.  
By Lemmas \ref{cc} and \ref{sliding}, we may actually assume that the arcs $\alpha_{ij}$ and $\alpha'_{ij}$ coincide. 

Suppose that there is a pair of integer $i,j$ such that $E_{ij}\neq E'_{ij}$. 
We may assume that $(i,j)=(1,2)$. 
Since $L$ and $L'$ have same 2-component sublinks, so do $O_{\bigcup_{i<j}E_{ij}}$ and $O_{\bigcup_{i<j}E'_{ij}}$. 
In particular, $(O_1\cup O_2)_{E_{12}}$ is ambient isotopic to $(O_1\cup O_2)_{E'_{12}}$.  
Notice that this ambient isotopy can be performed in a regular neighborhood of $D_1\cup D_2\cup\alpha_{12}$, where 
$D_1\cup D_2$ is a disjoint union of disks bounded by $O_1\cup O_2$. 
It follows that 
$O\cup \left(\bigcup_{i<j}E'_{ij}\right)$ 
can be deformed into 
$O\cup E_{12}\cup \left( \bigcup_{(i,j)\neq(1,2);i<j}E'_{ij}\right)$ 
by a sequence of moves of the following three types: \\
(i)~leaf sliding (see Figure 2.2) involving two $C_1^d$-trees with index $\{1,2\}$ and $\{1,k\}$ (resp. $\{2,k\}$) for $k\ge 3$, \\
(ii)~ passing the edge of a $C_1^d$-tree with index $\{1,2\}$ across the edge of a $C_1^d$-tree with 
index $\{1,k\}$ (resp. $\{2,k\}$) for $k\ge 3$, \\
(iii)~passing $O_1\cup O_2$ accross all $C_1^d$-trees with index $\{1,k\}$ or $\{2,k\}$ ($k\ge 3$) as illustrated in Figure~\ref{deform1}.\\
Moves of type (i) and (ii) can be achieved by $C^d_2$-moves, according Lemmas \ref{sliding} and \ref{cc} respectively. 
We also have the following: 
\begin{claim}
A move of type (iii) can always be achieved by a sequence of $C^d_2$-moves.  
\begin{figure}[!h]
\includegraphics[trim=0mm 0mm 0mm 0mm, width=.6\linewidth]
{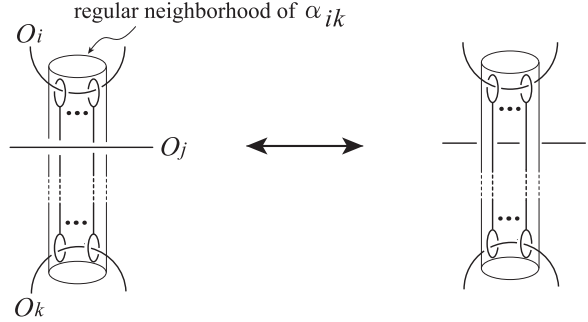}
\caption{Here, $i,j\in\{1,2\}$ are possibly equal, and $k\geq 3$.}
\label{deform1}
\end{figure}
\end{claim}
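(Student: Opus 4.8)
The plan is to realise the type-(iii) move as an ambient isotopy of $O_1\cup O_2$ (carrying the clasper $E_{12}$ along with it), decomposed into a sequence of elementary passings across the edges of the $C_1^d$-trees with index $\{1,k\}$ or $\{2,k\}$, $k\geq 3$. The leaves of these trees that lie on the components $O_k$ sit at height $k\geq 3$, away from the region in which the isotopy takes place, so the only interactions to control are those between the two strands $O_1,O_2$ and the edges that pass through this region. The whole difficulty is concentrated in the case $i=j$ of Figure~\ref{deform1}: a single component $O_i$ crossing an edge of a tree $G$ with index $\{i,k\}$ would, by Lemma~\ref{cc}, be realised by surgery along $C_2$-trees of index $\{i,k\}$, which are \emph{not} $C_2^d$-trees.

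The key idea that removes this difficulty is that an edge should be \emph{carried along} by whichever of $O_1,O_2$ its leaf is attached to, rather than being crossed. A tree $G$ of index $\{1,k\}$ has a leaf $\ell_1$ on $O_1$; as we isotope $O_1$ we drag $\ell_1$, and hence the anchored end of the edge of $G$, along with it. In this way $O_1$ never crosses an edge anchored on $O_1$ itself, and the only edges it does cross are those of trees of index $\{2,k\}$, whose leaf lies on $O_2$. By Lemma~\ref{cc} such a crossing change is realised by surgery along $C_2$-trees of index $\{1,2,k\}$, that is, by $C_2^d$-moves. Symmetrically, an edge of $G$ dragged by $O_1$ only ever crosses $O_2$ (again index $\{1,2,k\}$, hence a $C_2^d$-move) and never $O_1$. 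The roles of $O_1$ and $O_2$ are interchangeable, so every edge crossing incurred by the isotopy falls into this good case.

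It remains to pay for dragging the leaves. As $\ell_1$ is slid along $O_1$ it passes over the leaves on $O_1$ of the other trees; by Lemma~\ref{sliding} each such leaf slide is realised by $C_2^d$-moves, \emph{provided} the two trees have distinct indices. The only indices that could coincide are those of several trees all of index $\{1,k\}$; these we arrange beforehand to have their leaves consecutive along $O_1$ and slide as a single block, so that no leaf is ever slid over another of equal index. The main obstacle is precisely this organisation: checking that the isotopy of $O_1\cup O_2$ can be chosen so that (a) each component crosses only edges anchored on the other component, and (b) the accompanying leaf slides never pair two leaves of equal index. Once this bookkeeping is in place, Lemmas~\ref{cc} and~\ref{sliding} convert every elementary passing into a sequence of $C_2^d$-moves, which proves the claim.
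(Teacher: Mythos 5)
Your reduction is correctly organized: the case $i\neq j$ is handled by Lemma~\ref{cc} exactly as in the paper (a crossing of $O_j$ with an edge of a tree of index $\{i,k\}$ is paid for by $C_2$-trees of index $\{i,j,k\}$, which are $C^d_2$-trees), and you correctly isolate $i=j$ as the whole difficulty. But for $i=j$ your argument has a genuine gap, and you say so yourself: the assertion that the isotopy of $O_1\cup O_2$ can be chosen so that (a) each component crosses only edges anchored on the other component and (b) no leaf is ever slid over a leaf of equal index is not deferrable bookkeeping --- it \emph{is} the claim. In the type (iii) configuration the moving strand and the leaves anchoring the offending edges lie on the \emph{same} circle $O_i$, so dragging the leaves along $O_i$ does not by itself transfer the strand to the other side of the edge bundle; the strand must still get past edges whose lower ends lie on its own component, and nothing in your sketch shows that this crossing is eliminated rather than merely relocated. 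Moreover, the sweep of the dragged edges creates interactions you do not control: a crossing between two dragged edges of trees with the \emph{same} index $\{i,k\}$ would, by Lemma~\ref{cc}, cost $C_2$-trees of index $\{i,k\}\cup\{i\}=\{i,k\}$, which are not $C^d_2$-trees; your block trick constrains the leaf slides but says nothing about the edges swept above those leaves, nor about the swept edges passing across leaves of other claspers, a move covered by neither Lemma~\ref{sliding} nor Lemma~\ref{cc}.

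The paper closes the $i=j$ case by a different and explicit construction (Figure~\ref{deform2}): instead of pushing $O_i$ through the edges of $E'_{ik}$, or moving those edges out of the way, the strand of $O_i$ is rerouted so that it passes only across edges of the claspers $\bigcup_{l\neq i}E'_{kl}$ (possibly with $k>l$), i.e.\ claspers whose index does \emph{not} contain $i$; Lemma~\ref{cc} then converts each such crossing into surgery along $C_2$-trees of index $\{k,l\}\cup\{i\}$, three distinct entries, hence into genuine $C^d_2$-moves. In other words, the paper pays for the move with crossings of \emph{third-party} claspers attached to $O_k$, which are automatically of the good kind, rather than attempting your parallel-transport of the anchored edges, where the bad same-index interactions are hard to exclude. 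To salvage your scheme you would need to establish (a) and (b) by an explicit picture of comparable precision, at which point you would most likely rediscover this rerouting.
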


\begin{proof}
The result follows immediately from Lemma~\ref{cc} when $i\neq j$.  
If $i=j$, Figure~\ref{deform2} shows how the desired deformation can be achieved by passing $O_i$ 
across edges of $\bigcup_{l\neq i}E'_{kl}$, where possibly $k>l$. 
Lemma~\ref{cc} gives us the claim.
\begin{figure}[!h]
\includegraphics[trim=0mm 0mm 0mm 0mm, width=.99\linewidth]
{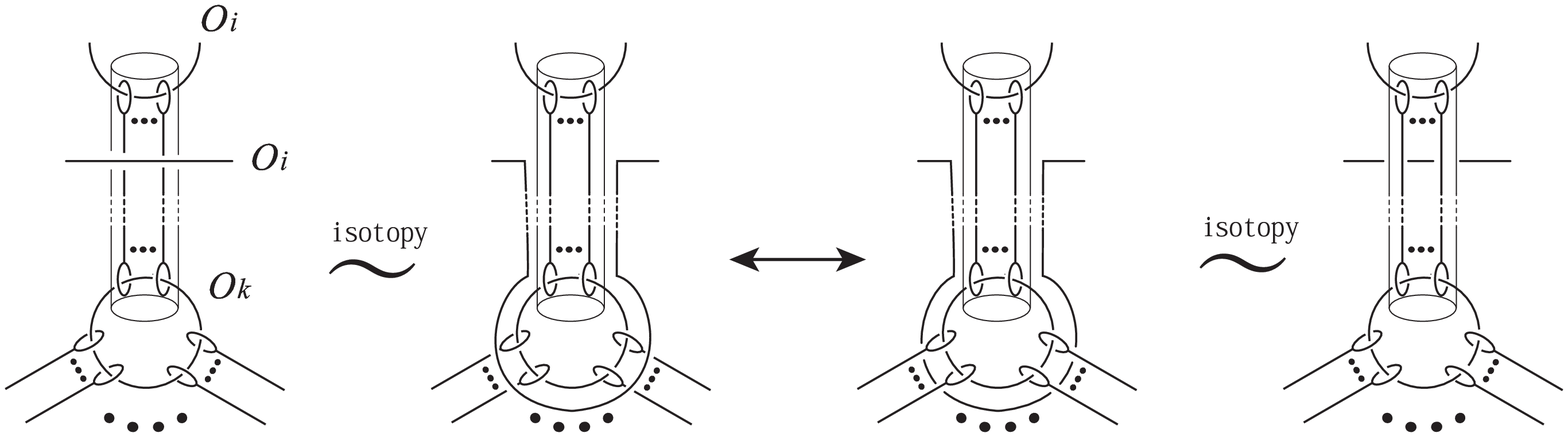}
\caption{}
\label{deform2}
\end{figure}\end{proof}

We conclude that 
$O_{\bigcup_{i<j}E'_{ij}}$ is $C_2^d$-equivalent to $O_{E_{12}\cup \left( \bigcup_{(i,j)\neq(1,2);i<j}E'_{ij}\right)}$.  

Repeating this argument for all pairs $(i,j)$ completes the proof.  

\section{Proof of Proposition \ref{counter-ex}}

In this section, we show that each of the hypotheses imposed in Theorem \ref{d-delta} is necessary for the conclusion to hold. 

\subsection{The case $k\ge 3$: proof of Proposition \ref{counter-ex}~(1)}

We first observe that Theorem \ref{d-delta} does not hold for $k\geq 3$. 

Let $L_n$ and $L'_n$ be two $n$-component links as illustrated in Figure~\ref{example1}. 
Clearly, both links have trivial components, and have same $k$-component sublinks for $k\leq n-1$.
\begin{figure}[!h]
\includegraphics[trim=0mm 0mm 0mm 0mm, width=.6\linewidth]
{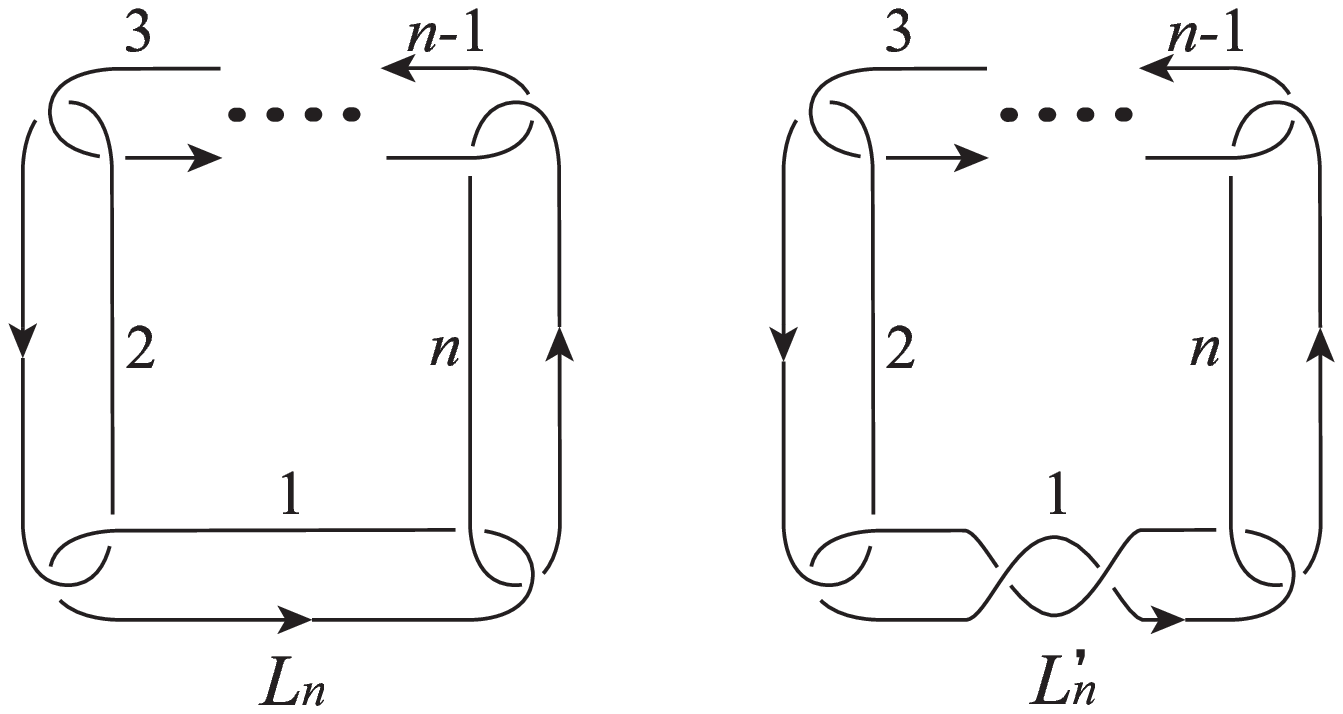}
\caption{ } 
\label{example1}
\end{figure}

On the other hand, we notice that Arf$(L_n)=0$ and Arf$(L'_n)=1$, 
where Arf denotes the Arf invariant \cite{rob}. 
Note that a $C_k$-move preserves the Arf invariant when $k\geq 3$, 
since it can be achieved by a pass-move, which preserves the Arf invariant \cite{MN}.  
This implies that $L_n$ and $L'_n$ are not $C_k$-equivalent, and hence not $C_k^d$-equivalent.

\subsection{An invariant of $C_k^d$-equivalence: proof of Proposition \ref{counter-ex}~(2)}

We now consider the case $k=2$, but without the assumption that all components are trivial. 
For that purpose, we first introduce an invariant of $C_k^d$-equivalence derived from the linking number 
in the double branched cover of $S^3$ branched over a knot.  
% 
% Let $M$ be a rational homology $3$-sphere and $K_1\cup K_2$ a 
% $2$-component oriented link in $M$. Then there is a $2$-chain $F$ in 
% $M$ such that $F$ bounds $cK_1$, where $cK_1$ is a disjoint 
% union of $c$ copies of $K_1$ in a small 
% neighborhood of $K_1$.  We define 
% \[\mathrm{lk}_M(K_1,K_2)=\frac{F\cdot K_2}{c}\in {\Bbb Q},\]
% where $F\cdot K_2$ is the intersection number of $F$ and $K_2$ \cite{S-T}. 
% It is known that this linking number is well-defined. 

Let $K\cup K_1\cup\cdots\cup K_m$ $(m\geq 1)$ be an oriented 
$(m+1)$-component link in $S^3$. 
If the linking number $\mathrm{lk}(K,K_i)$ is even 
for all $i(=1,...,m)$, then there is a 
possibly nonorientable surface $F$ bounded 
by $K$ disjoint from $K_1\cup\cdots\cup K_m$. 
Let $G_{\alpha}$ be the {\em Goeritz matrix} \cite{Goe} %,G-L}
with respect to a basis
$\alpha=(a_1,...,a_n)$ of $H_1(F)$, i.e., 
the $(i,j)$-entry of $G_{\alpha}$ is 
equal to $\mathrm{lk}(a_i,\tau a_j)$, where $\tau a_j$ is a 1-cycle 
in $S^3-F$ obtained by pushing off $2a_j$ in both normal 
directions.  
Let $V_{\alpha}(K_i)=(\mathrm{lk}(K_i,a_1),...,\mathrm{lk}(K_i,a_n))$. 
In \cite{PY} J.~Przytycki and the last author define, for $i,j$ $(1\leq i,j\leq m$), 
\[\lambda_F(K_i,K_j)=
V_{\alpha}(K_i)G_{\alpha}^{-1}V_{\alpha}(K_j)^T,\]
and $\lambda_F(K_i,K_j)=0$ when $F$ is a $2$-disk. 
It follows directly from \cite[Thm~2.3]{PY} that for the double branched cover $M$ of $S^3$ branched over $K$ and 
for lifts $\widetilde{K_i}$ and $\widetilde{K_j}$ of $K_i$ and $K_j$ respectively, we have 
\[\mathrm{lk}_M(\widetilde{K_i},\widetilde{K_j})\equiv \pm \lambda_F(K_i,K_j)~\mathrm{mod}~1.\]
If two links $L=K\cup K_1\cup\cdots\cup K_m$ and $L'=K'\cup K'_1\cup\cdots\cup K'_m$
are $C_k^d$-equivalent for some $k~(2\leq k \leq m)$, 
%%%each $K_i$ is ambient isotopic to $K'_i$ in the complement of $K(=K')$. In particular, 
then $K_i\cup K_j$ and $K'_i\cup K'_j$ are homotopic in the complement of $K$. 
This implies that there is a lift $\widetilde{K_i}\cup \widetilde{K_j}$ (resp. 
$\widetilde{K'_i}\cup \widetilde{K'_j}$) of $K_i\cup K_j$ (resp. $K'_i\cup K'_j$) 
such that 
\[\mathrm{lk}_M(\widetilde{K_i},\widetilde{K_j})\equiv\mathrm{lk}_M(\widetilde{K'_i},\widetilde{K'_j})~\mathrm{mod}~1.\]
It follows that we have the following proposition.

\begin{proposition}\label{inv}
For any $k\geq 2$, $\pm\lambda_F(K_i,K_j)~(\mathrm{mod}~1)$ is an invariant of $C_k^d$-equivalence.
\end{proposition}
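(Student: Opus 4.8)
The plan is to establish that $\pm\lambda_F(K_i,K_j) \pmod 1$ is unchanged under a single $C_k^d$-move (for $k\geq 2$) and hence is constant on each $C_k^d$-equivalence class. Since the paper has already assembled all the analytic input immediately before the statement, my first step is simply to organize that input: by the displayed congruence derived from \cite[Thm~2.3]{PY}, we have $\mathrm{lk}_M(\widetilde{K_i},\widetilde{K_j}) \equiv \pm\lambda_F(K_i,K_j) \pmod 1$, so it suffices to show that the linking number in the double branched cover $M$, taken modulo $1$ between appropriate lifts, is a $C_k^d$-invariant. This reduces the problem to a statement purely about the branched cover.

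The key geometric observation, already flagged in the excerpt, is that a $C_k^d$-move with $k\geq 2$ relating $L$ and $L'$ keeps the branch knot $K$ fixed and moves $K_i\cup K_j$ only by a homotopy in the complement $S^3\setminus K$. The reason is that a $C_k^d$-tree involves $k+1 \geq 3$ pairwise distinct components, so when we restrict attention to the two components $K_i,K_j$ and the branch knot $K$, at most two of these three can meet any given clasper; the third is untouched, and in particular the effect on $K_i\cup K_j$ inside $S^3\setminus K$ is realized by crossing changes and isotopies that do not cross $K$. Thus $K_i\cup K_j$ and $K'_i\cup K'_j$ are freely homotopic in $S^3\setminus K$. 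The next step is to lift this homotopy: a homotopy in $S^3\setminus K$ lifts to a homotopy in the double cover $M\setminus \widetilde{K}$ once we fix a lift of the basepoint loops, producing lifts $\widetilde{K_i}\cup\widetilde{K_j}$ and $\widetilde{K'_i}\cup\widetilde{K'_j}$ that are homotopic in $M$.

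The final step is to invoke homotopy invariance of the linking number: since $\mathrm{lk}_M(\widetilde{K_i},\widetilde{K_j})$ depends only on the free homotopy classes of the two lifted curves (the two lifts remaining disjoint throughout, which is guaranteed because $K_i$ and $K_j$ stay disjoint during a $C_k^d$-move and their lifts inherit disjointness), we conclude that $\mathrm{lk}_M(\widetilde{K_i},\widetilde{K_j}) \equiv \mathrm{lk}_M(\widetilde{K'_i},\widetilde{K'_j}) \pmod 1$. Combined with the $\pm\lambda_F$ congruence, this yields the equality of $\pm\lambda_F(K_i,K_j) \pmod 1$ for $L$ and $L'$, proving the invariance. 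The remaining care is to check that changing the choice of lift only flips the overall sign, which is exactly why the invariant is stated up to $\pm$ and modulo $1$, reconciling the indeterminacy in the choice of lifted curves with the stated form of the invariant.

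I expect the main obstacle to be the lifting step and the bookkeeping of signs and choices: the homotopy in $S^3\setminus K$ has two lifts to the double cover differing by the deck transformation, and I must verify that the two lifts of $K_i\cup K_j$ can be chosen compatibly with the two lifts of $K'_i\cup K'_j$ so that the linking numbers match modulo $1$, rather than merely matching up to the deck-induced sign. Making precise that the homotopy preserves disjointness of the two lifted curves — so that the linking number is genuinely a homotopy invariant and not merely defined up to crossing corrections — is the delicate point; everything else is a direct reading-off of the congruences already in hand.
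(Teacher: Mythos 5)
Your overall route is the same as the paper's: reduce via the congruence $\mathrm{lk}_M(\widetilde{K_i},\widetilde{K_j})\equiv\pm\lambda_F(K_i,K_j)\ \mathrm{mod}\ 1$ from \cite[Thm~2.3]{PY}, show that a $C_k^d$-move with $k\geq 2$ moves $K_i\cup K_j$ only by a homotopy in $S^3\setminus K$, and lift. But your justification of the geometric step is wrong. You claim that since a $C_k^d$-tree involves $k+1\geq 3$ pairwise distinct components, ``at most two of $\{K,K_i,K_j\}$ can meet any given clasper.'' This is a non sequitur and is false: the index of a $C_k^d$-tree has $k+1\geq 3$ elements and can contain all three of the relevant indices --- for instance, when $m=2$ and $k=2$, every $C_2^d$-tree meets all of $K$, $K_1$, $K_2$, and this is exactly the critical case (such a move changes $\bar\mu(0ij)$-type data of the three-component sublink, so the sublink is genuinely modified). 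The correct repair is Lemma~\ref{index}: because $|\mathrm{index}(T)|=k+1\geq 3$, one may always choose a two-element subset $S\subset\mathrm{index}(T)$ avoiding the index of $K$ --- take $S=\{i,j\}$ if both lie in the index, or $S=\{i,l\}$ with $l\neq j$ not the index of $K$ if only $i$ does, etc. --- and realize the surgery by $C_1$-trees with index $S$, i.e.\ by crossing changes between two components distinct from $K$. This is what produces the homotopy of $K_i\cup K_j$ in the complement of $K$, and it is precisely where the hypothesis $k\geq 2$ enters (for $k=1$ a tree may have index consisting of $K$ and one $K_i$, forcing crossings with $K$).

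There is a second error downstream: you assert that the lifted curves ``remain disjoint throughout'' the homotopy because $K_i$ and $K_j$ stay disjoint during a $C_k^d$-move. They do not: the homotopy just constructed consists of crossing changes between $K_i$ and $K_j$ (and self-crossings), so the curves, and their lifts, do cross during it. If disjointness were genuinely preserved, $\mathrm{lk}_M$ would be invariant as a rational number and the reduction modulo $1$ would be superfluous. The correct statement, which the paper's argument relies on, is homological: $H_1(M)$ is finite for the double branched cover of a knot, so $\mathrm{lk}_M\ \mathrm{mod}\ 1$ is the torsion linking pairing and depends only on the homology classes of the (disjoint) initial and final lifts; each crossing occurring during the homotopy changes $\mathrm{lk}_M$ by an integer, which is absorbed by working modulo $1$. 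With these two repairs --- Lemma~\ref{index} in place of your ``at most two components meet the clasper'' claim, and homological invariance of $\mathrm{lk}_M$ modulo $1$ in place of invariance under disjoint homotopy --- your outline coincides with the paper's proof; your handling of the $\pm$ ambiguity from the choice of lifts is consistent with how the statement is phrased.
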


We can now complete the proof of Proposition \ref{counter-ex}.

\begin{proof}[Proof of Proposition \ref{counter-ex}~(2)]
Let $L=K\cup K_1\cup K_2$ and $L'=K\cup K_1\cup K'_2$ be links as illustrated in Figure~\ref{example2}. 
Note that $L$ and $L'$ have common 2-component sublinks. 
Let $F$ be a nonorientable surface, and let $a_1,a_2$ be a basis of $H_1(F)$ as illustrated in Figure~\ref{example2}. 
Then we have $\lambda_F(K_1,K_2)=0$ and $\lambda_F(K_1,K'_2)=-{1}/{3}$. 
Proposition~\ref{inv} implies that $L$ and $L'$ are not $C_2^d$-equivalent.
\begin{figure}[!h]
\includegraphics[trim=0mm 0mm 0mm 0mm, width=.5\linewidth]
{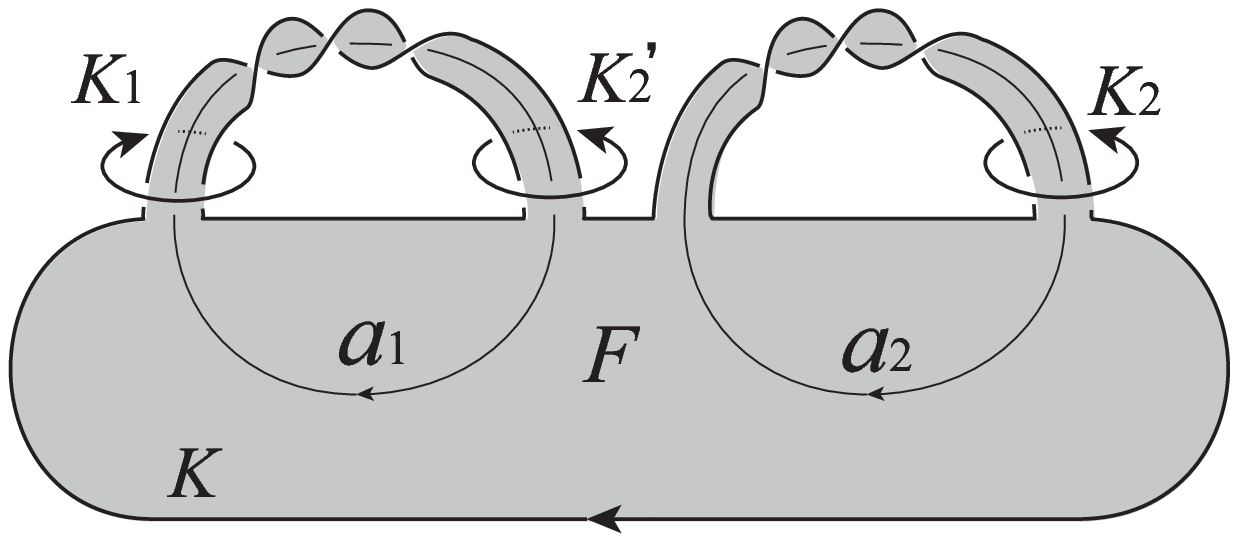}
\caption{}
\label{example2}
\end{figure}\end{proof}  

\appendix

\section{Finite type invariants of $(n,k)$-Brunnian links}\label{ftink}

Theorem \ref{brunnian} states that a link is $(n,k)$-Brunnian if and only if it is $C^d_k$-equivalent to the $n$-component trivial link. 
As recalled in the introduction, the case $k=n-1$, i.e. the case of Brunnian links, was shown in \cite{H2,Miyazawa-Y},  
and is a key ingredient in proving several results and Brunnian links and their finite type invariants.  
Using Theorem \ref{brunnian}, we can easily generalize these to $(n,k)$-Brunnian links.  
We only provide statements here, since the proofs are straightforward generalizations of \cite{H2,Miyazawa-Y,HM,HM2}, 
and require no new idea.  

In \cite{H2}, Habiro shows that for $n\geq 3$, an 
$n$-component Brunnian links cannot be distinguished from the trivial link by any finite type invariant of order less than $2(n-1)$. 
(Note that for $n=2$, this does not hold since the Hopf link and the 2-component trivial link can be 
distinguished by the linking number, which is of order 1.) 
By the same arguments as those in \cite[\S~4]{H2}, we have that if a link is $C_k^d$-equivalent 
to a trivial link for $k\geq 2$, then these links cannot be distinguished by any finite type invariant of 
order less than $2k$. Hence we obtain the following result.
\begin{theorem}
For $n>k\geq 2$, $(n,k)$-Brunnian links and the $n$-component trivial link cannot 
be distinguished by any finite type invariant of order less than $2k$.
\end{theorem}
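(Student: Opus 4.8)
The plan is to obtain the statement by combining Theorem~\ref{brunnian} with a clasper-theoretic analysis of the Goussarov--Vassiliev filtration, following the strategy of \cite[\S4]{H2}. By Theorem~\ref{brunnian}, an $(n,k)$-Brunnian link $L$ is $C_k^d$-equivalent to the $n$-component trivial link $O$, so it suffices to prove the general assertion quoted before the statement: if $L$ is $C_k^d$-equivalent to $O$ (with $k\geq 2$), then no finite type invariant of order less than $2k$ separates $L$ from $O$. First I would fix a disjoint union $F$ of $C_k^d$-trees for $O$ with $O_F$ ambient isotopic to $L$, and pass to the clasper model of finite type invariants, in which an invariant of order less than $d$ is characterized by its vanishing on the alternating sums attached to clasper families of total degree at least $d$. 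At this level the \emph{naive} bound is immediate: since a $C_k^d$-tree is in particular a $C_k$-tree, the link $L$ is $C_k$-equivalent to $O$, and hence $L$ and $O$ already agree on all finite type invariants of order less than $k$. The entire content of the theorem is therefore the improvement of this threshold from $k$ to $2k$, and this improvement is exactly what the distinct-index (``$d$'') hypothesis should buy.

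To carry out the improvement I would argue that the difference $[O_F]-[O]$ in fact lies in filtration degree $2k$, rather than merely $k$. The relevant structural input is that each tree of $F$ meets $k+1$ \emph{pairwise distinct} components, every one of which is unknotted; moreover, because all $k$-component sublinks are trivial, the strands through each leaf can be isotoped to bound disjoint disks within the sublinks that the clasper calculus of Section~2 sees. I would then rewrite $O_F$ by repeatedly invoking that calculus: sliding leaves over one another (Lemma~\ref{sliding}), replacing a single $C_k^d$-tree by lower-degree trees supported on a prescribed index set (Lemma~\ref{index}), and trading crossing changes between edges and components for trees of one higher degree (Lemma~\ref{cc}). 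The goal of these manipulations is to reorganize the filtration expansion so that every elementary contribution in the intermediate degrees $k,k+1,\dots,2k-1$ is absorbed, and the first surviving term occurs in degree $2k$. This is precisely the computation of \cite[\S4]{H2} carried out for the case $k=n-1$, and I would run it verbatim but over an arbitrary index set of size $k+1$ rather than over the full set of $n$ components; the doubling from $k$ to $2k$ should come out of the same bookkeeping, since it is driven only by the local structure of a $C_k^d$-tree and not by $n$.

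The step I expect to be the main obstacle is exactly this degree-$2k$ estimate, i.e.\ verifying that nothing survives in degrees $k$ through $2k-1$. Observing that a single $C_k^d$-tree lies in filtration degree $k$ is routine; what is delicate is to propagate the triviality of the components and the distinct-index condition through the sequence of slidings and crossing-change replacements, so that the obstruction to improving $C_{k+1}$-equivalence, then $C_{k+2}$-equivalence, and so on up to $C_{2k}$-equivalence, is successively killed. Keeping track of the indices of the auxiliary trees produced by Lemmas~\ref{index} and~\ref{cc}, and checking that they never fall back below the required degree, is the bulk of the work and the place where the $k=n-1$ argument of \cite{H2} must be generalized with care. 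Once $[O_F]-[O]$ is shown to lie in filtration degree $2k$, every finite type invariant of order less than $2k$ takes equal values on $L$ and $O$, which is the assertion; the remaining finite type consequences recorded in this appendix then follow along the lines of \cite{HM,HM2} with no further idea.
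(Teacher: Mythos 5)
Your proposal follows exactly the paper's (very brief) proof: reduce via Theorem~\ref{brunnian} to the assertion that $C_k^d$-equivalence to the trivial link implies agreement of all finite type invariants of order less than $2k$, and establish that assertion by running Habiro's argument from \cite[\S 4]{H2} with the case $k=n-1$ replaced by a general index set of size $k+1$. The paper gives no more detail than you do---it simply cites \cite[\S 4]{H2} as a straightforward generalization---so your identification of the degree-$2k$ filtration estimate as the content to be transplanted is precisely the intended proof.
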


In \cite{HM,HM2}, the study of finite type invariants of Brunnian links is continued, by expressing the restriction 
of an invariant of degree $2n-1$ to $n$-component Brunnian links as a quadratic form on the Milnor link-homotopy invariants of length $n$, 
see \cite{Milnor}.  
The arguments used in \cite{HM} (and \cite{HM2}) can be generalized in a straightforward way to $(n,k)$-brunnian links to prove the following. 
\begin{theorem}
  Let $f$ be any finite type link invariant of degree $2k+1$ taking values in an abelian group $A$.  
  Then there are (non-unique) elements 
  $f^{\sigma ,\sigma '}_{I}\in A$ 
  for $\sigma ,\sigma '$ in the symmetric group $S_{k-1}$ on the set
  $\{1,\ldots ,k-1\}$ and for any subsequence 
  $I$ of $12...n$ of length $k+1$, such that, for any $(n,k)$-Brunnian link
  $L$, the difference $f(L)-f(O)$ is equal to 
  \begin{equation*}
   \sum_{\substack{I=i_1i_2...i_{k+1} \\ \textrm{subseq. of } 12...n}} 
   \sum_{\sigma,\sigma'\in S_{k-1}} 
   f^{\sigma ,\sigma '}_{I}\bar\mu _L(i_{\sigma (1)}\ldots i_{\sigma (k-1)} i_k i_{k+1})\bar\mu _L(i_{\sigma' (1)} \ldots i_{\sigma' (k-1)} i_k i_{k+1}). 
  \end{equation*}
Here $O$ denotes the $n$-component trivial link and $\bar \mu_L$ denotes Milnor invariants of~$L$.  
\end{theorem}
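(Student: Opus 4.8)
The plan is to reduce the statement about finite type invariants of $(n,k)$-Brunnian links to the corresponding statement for Brunnian links proved in \cite{HM,HM2}, using Theorem~\ref{brunnian} as the bridge. By Theorem~\ref{brunnian}, any $(n,k)$-Brunnian link $L$ is $C_k^d$-equivalent to the trivial link $O$; moreover, by the clasper-theoretic description recalled in Section~2, this means $L$ is obtained from $O$ by surgery along a disjoint union of $C_k^d$-trees, each of which has index a subset $I=\{i_1,\ldots,i_{k+1}\}\subset\{1,\ldots,n\}$ of size $k+1$. The first step is therefore to organize these $C_k^d$-trees according to their index $I$, so that the contribution to any finite type invariant decomposes as a sum over $(k+1)$-element subsequences $I$ of $12\ldots n$.

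For each fixed index $I$, the $(k+1)$-component sublink $\bigcup_{i\in I} K_i$ is a Brunnian link (its proper sublinks are trivial because $L$ is $(n,k)$-Brunnian), and the $C_k^d$-trees with that index realize its $C_k^d$-equivalence to the trivial link. Here I would invoke the results of \cite{HM,HM2}: for an $(k+1)$-component Brunnian link, the restriction of a degree-$(2k+1)$ invariant is expressed as a quadratic form in the length-$(k+1)$ Milnor link-homotopy invariants $\bar\mu_L(i_{\sigma(1)}\ldots i_{\sigma(k-1)}i_k i_{k+1})$, indexed by permutations $\sigma\in S_{k-1}$ acting on the first $k-1$ entries (the last two entries being fixed, which reflects the standard symmetry and cyclic normalization of Milnor invariants of maximal length). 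The coefficients $f_I^{\sigma,\sigma'}\in A$ are precisely the values extracted from the quadratic form associated to $f$ restricted to the sublink indexed by $I$.

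The main technical point — and the step I expect to be the principal obstacle — is to show that the invariant $f$ of degree $2k+1$ is \emph{additive} over the different index classes, i.e. that contributions from $C_k^d$-trees with distinct indices do not interact in degree $2k+1$. This is a calculus-of-claspers argument: cross terms between trees of indices $I\neq I'$ would produce, after the relevant clasper surgery moves, graphs of total degree strictly larger than $2k$, hence lie in a filtration degree that a degree-$(2k+1)$ invariant cannot detect, or else they vanish for index reasons. Controlling this requires the finite-type filtration estimates from \cite{H2,HM} together with Lemmas~\ref{cc} and \ref{index} to manipulate and separate the trees by index, exactly as in the Brunnian case $k=n-1$. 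Once additivity over indices is established, the formula follows by summing the per-index quadratic expressions, yielding the double sum over $I$ and over $\sigma,\sigma'\in S_{k-1}$.

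Since the authors emphasize that the proof is ``a straightforward generalization'' requiring ``no new idea,'' the strategy is genuinely to transcribe the arguments of \cite{HM,HM2} with the single substitution of Theorem~\ref{brunnian} for the special Brunnian case; the non-triviality, if any, lies in verifying that the filtration and additivity bookkeeping survives when $n>k+1$, so that the sublinks indexed by different $I$ can be treated independently at the relevant order.
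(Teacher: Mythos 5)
Your overall strategy coincides with the one the paper intends: the appendix deliberately gives no detailed proof, asserting only that the arguments of \cite{HM,HM2} generalize once Theorem~\ref{brunnian} presents an $(n,k)$-Brunnian link as the trivial link $O$ surgered along a disjoint union of $C^d_k$-trees, and your decomposition of those trees by index $I$, with the quadratic form machinery of \cite{HM} applied index by index, is precisely that transcription. You also correctly identify the sole point where the generalization is not literally vacuous, namely the independence of contributions from distinct indices (for $n=k+1$ there is only one possible index, so this issue does not arise in \cite{HM}).

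However, your proposed disposal of that point is wrong as stated: a cross term coming from a pair of disjoint degree-$k$ trees $T$, $T'$ with indices $I\neq I'$ has degree exactly $2k$, not ``strictly larger than $2k$'', so it is squarely within reach of an invariant of degree $2k+1$ and the filtration estimate alone does not kill it. (Filtration degree only disposes of subsets of three or more trees, of degree $\geq 3k\geq 2k+2$ for $k\geq 2$ --- which is also where the hypothesis $k\geq 2$ enters.) What is needed is the ``index reasons'' clause you mention but leave unspecified: when $i\in I\setminus I'$, the component $O_i$ is a split unknot meeting only the single leaf of $T$, and one must run an argument in the style of Habiro's symmetry trick from \cite[\S 4]{H2} (sliding that leaf around $O_i$, with errors controlled by Lemmas~\ref{cc} and \ref{index}, whose index statements guarantee the error trees have index inside $I\cup I'$) to show that the cross term $[O_{T\cup T'}]-[O_T]-[O_{T'}]+[O]$ either lies in the span of degree $\geq 2k+2$ differences or is absorbed into same-index quadratic terms --- the same mechanism that handles the single-tree terms in \cite{H2,HM}. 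A further small imprecision: the coefficients $f^{\sigma,\sigma'}_I$ are not obtained by ``restricting $f$ to the sublink indexed by $I$'' (which would be a $(k+1)$-component link, on which $f$ is not defined), but by evaluating $f$ on the $n$-component links obtained from $O$ by surgery along pairs of standard trees of index $I$. Neither repair requires a new idea --- which is consistent with the paper's claim --- but your write-up, taken literally, leaves the key cross-index step unproved.
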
 
\noindent In other words, the restriction of an invariant of degree $2k+1$ to $(n,k)$-Brunnian links can be expressed as 
a quadratic form on the Milnor link-homotopy invariants of its $(k+1)$-components sublinks, and, in particular, 
is determined by the $(k+1)$-component sublinks.   

Milnor invariants are useful not only for understanding finite type invariants of $(n,k)$-Brunnian links, but also for providing classification results.  
It is indeed known that $n$-component Brunnian links are classified up to $C_n$-equivalence by Milnor link-homotopy invariants 
\cite{Miyazawa-Y,HM}.  
Again, strictly similar arguments can be used to extend this classification result as follows.  
\begin{theorem}
 Two $(n,k)$-Brunnian links are $C_{k+1}$-equivalent if and only if they cannot be distinguished by any Milnor 
link-homotopy invariant of length $k+1$.  
\end{theorem}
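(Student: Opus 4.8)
The plan is to prove both directions of the equivalence, with the forward direction being an immediate consequence of the theory assembled so far and the reverse direction requiring the real work of realizing a prescribed collection of Milnor data by a sequence of surgeries.

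For the ``only if'' direction, I would argue as follows. Suppose $L$ and $L'$ are $(n,k)$-Brunnian links that are $C_{k+1}$-equivalent. A $C_{k+1}$-move is obtained by surgery along a $C_{k+1}$-tree, which has $k+2$ leaves; by the standard relationship between clasper surgery and Milnor invariants (see \cite{Milnor}), surgery along such a tree can only affect Milnor invariants of length at least $k+2$. Hence $C_{k+1}$-equivalence preserves all Milnor invariants of length $\le k+1$; in particular it preserves the length $k+1$ link-homotopy invariants $\bar\mu_L(I)$ for every subsequence $I$ of length $k+1$. This shows $L$ and $L'$ cannot be distinguished by any Milnor link-homotopy invariant of length $k+1$, which is the content of the ``only if'' statement.

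For the ``if'' direction, which is the substantive half, I would first invoke Theorem~\ref{brunnian}: since both $L$ and $L'$ are $(n,k)$-Brunnian, each is $C_k^d$-equivalent to the trivial link $O$. This lets me replace the comparison of $L$ with $L'$ by the comparison of each link with $O$, and presents $L$ (and similarly $L'$) as $O_{F}$ for some disjoint union $F$ of $C_k^d$-trees. The strategy is then to show that, working modulo $C_{k+1}$-equivalence, this collection of $C_k^d$-trees can be put into a standard form governed entirely by the length $k+1$ Milnor link-homotopy invariants of the $(k+1)$-component sublinks, in the spirit of the classification of Brunnian links in \cite{Miyazawa-Y,HM}. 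Concretely, I expect that the relevant invariants $\bar\mu_L(I)$ with $|I|=k+1$ count, with appropriate signs, the $C_k^d$-trees whose index is the set $I$, after these trees have been reduced to a standard local model and tree claspers that can be cancelled in pairs or absorbed into higher-degree corrections have been discarded using Lemmas~\ref{cc}, \ref{index} and \ref{sliding}. Since $L$ and $L'$ share all length $k+1$ link-homotopy invariants by hypothesis, the corresponding counts of standardized $C_k^d$-trees agree for every index set $I$, so the two standard forms coincide up to $C_{k+1}$-equivalence; transitivity then yields that $L$ and $L'$ are $C_{k+1}$-equivalent.

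The hard part will be establishing that the length $k+1$ Milnor link-homotopy invariants genuinely form a complete set of obstructions \emph{modulo} $C_{k+1}$-equivalence, i.e. that no further ``higher'' interactions among the $C_k^d$-trees survive at this level. This amounts to a careful clasper calculus bookkeeping argument: one must show that any two families of $C_k^d$-trees with the same index-wise Milnor counts differ only by moves (leaf slides, edge crossing changes, and tree cancellations) whose effect is realized by $C_{k+1}^d$-surgery, using precisely Lemmas~\ref{sliding}, \ref{cc}, and \ref{index}. Because this is exactly the structure worked out for Brunnian links in \cite{Miyazawa-Y} and \cite{HM}, and because Theorem~\ref{brunnian} reduces the $(n,k)$-Brunnian case to surgery along $C_k^d$-trees in the same way the Brunnian case reduces to $C_{n-1}^d$-trees, I expect the argument of \cite{HM} to transfer with only notational changes, replacing $n-1$ by $k$ throughout and summing over the $(k+1)$-element index subsets $I\subset\{1,\dots,n\}$ rather than over the single full index set.
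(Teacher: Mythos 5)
Your proposal takes essentially the same route as the paper: the paper gives no detailed proof of this theorem, stating only that it follows by ``strictly similar arguments'' from Theorem~\ref{brunnian} combined with the classification of Brunnian links up to $C_n$-equivalence by Milnor link-homotopy invariants in \cite{Miyazawa-Y,HM}, which is exactly your reduction (present both links as surgery on $O$ along $C_k^d$-trees, standardize modulo $C_{k+1}$-equivalence per index set $I$ with $|I|=k+1$, and match coefficients via $\bar\mu_L(I)$). One small correction: the fact that $C_{k+1}$-equivalence preserves Milnor invariants of length at most $k+1$ is due to Habiro \cite{H}, not to \cite{Milnor}, but this mis-citation does not affect the argument.
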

\end{document}